\theoremstyle{plain}
\newtheorem{them}{Theorem}[section]
\newtheorem{lemma}[them]{Lemma}
\newtheorem{prop}[them]{Proposition}
\newtheorem{coro}[them]{Corollary}
\theoremstyle{definition}
\newtheorem{defi}[them]{Definition}
\newtheorem{exam}[them]{Example}
\newtheorem{conv}[them]{Convention}
\newtheorem*{mthem}{Main Theorem}
\newcommand{\lmap}[3]{#1:#2 \longrightarrow #3}
\newcommand{\map}[3]{#1:#2 \rightarrow #3}
\newcommand{\im}[1]{\text{Im}#1}
\newcommand{\sd}[1]{\mathrm{Sd}(\mathcal{#1})}
\newcommand{\nn}[2]{\overline{N_{#1}}(\mathcal{#2})}
\newcommand{\Hom}{\mathrm{Hom}}
\begin{document}

\title{The Euler characteristic of infinite acyclic categories with filtrations}
\author{Kazunori Noguchi \thanks{noguchi@math.shinshu-u.ac.jp}}
\date{}
\maketitle

\begin{abstract}
The aim of this paper is twofold. One is to give a definition of the Euler characteristic of infinite acyclic categories with filtrations and the other is to prove the invariance of the Euler characteristic under the subdivision of finite categories. 
\end{abstract}

\thispagestyle{empty}
\section{Introduction}

The Euler characteristic of a finite simplicial complex is given by the alternating sum of the numbers of simplices in each dimension. On the other hand, Rota defined the Euler characteristic of finite posets \cite{Rot64}. The relation of these two Euler characteristics is explained by the following commutative diagram:

$$\xymatrix{
{\displaystyle \textbf{Finite posets} } \ar[dr]_{\chi_{\text{Rota}}} \ar[rr]^(0.40){\text{order complex}}&& { \displaystyle \textbf{Finite simplicial complexes} }\ar[dl]^{\chi}\\
&\mathbb{Z}
}$$
Here, the order complex of a finite poset $P$ consists of totally ordered $(n+1)$-subsets of $P$ as its $n$-simplices.

A poset $(P,\le)$ can be regarded as a small category whose set of objects is $P$ and set of the morphisms is the set of pairs $(x,y)$ such that $x\le y$ in $P$. Leinster extended Rota's Euler characteristic \cite{Leia}. He defined the Euler characteristic for finite categories satisfying certain conditions and it agrees with Rota's one when applied to finite posets. Since Leinster's Euler characteristic is also related to another variants such as the cardinality of groupoids \cite{BD} and the rational Euler characteristic of groups \cite{Wal}, it values in the rational numbers. Note that groups are regarded as small categories with one object. The following commutative diagram explains the relation of Leinster's Euler characteristic and Rota's theory
$$\xymatrix{
\chi_{\text{L}}\textbf{-categories}\ar[d]_{\chi_\text{L}}&{\displaystyle \textbf{Finite posets} } \ar[d]_{\chi_{\text{Rota}}} \ar@{_{(}->}[l] \ar[r]^(0.45){\text{order} \atop \text{complex}}& { \displaystyle \textbf{Finite} \atop \displaystyle \textbf{simplicial complexes} }\ar[d]^{\chi}\\
\mathbb{Q}&\mathbb{Z}\ar@{_{(}->}[l] \ar@{=}[r] &\mathbb{Z}
}$$
where $\chi_{\text{L}}\textbf{-categories}$ denotes the category of finite categories for which Leinster's Euler characteristic can be defined.

On the other hand, the Euler characteristic of simplicial complexes is invariant under the barycentric subdivision. Moreover, small categories also have a notion of the barycentric subdivision. The definition can be found in \cite{DK}, \cite{dH}. It is a functor from the category of small categories to itself 
$$\mathrm{Sd}:\textbf{Small categories}\longrightarrow\textbf{Small categories}$$
and it is homotopy invariant, that is, for any small category $\mathcal{J}$, $\mathcal{J}$ is homotopy equivalent to its barycentiric subdivision Sd$(\mathcal{J})$ after applied the classifying space functor $B$
$$B(\mathrm{Sd}(\mathcal{J})) \simeq B(\mathcal{J}).$$

In this paper, we will investigate the invariance of the Euler characteristic of finite categories under the barycentric subdivision. The difficulty is that the category of finite categories is not closed under the operation of the barycentric subdivision. Any small category which has an endomorphism other than the identity morphisms becomes an infinite category after applied the functor Sd. So we have to extend the Euler characteristic to the class of infinite categories that includes the image of Sd on the category of finite categories.

The barycentric subdivision of small categories is defined by using the notion of \textit{non-degenerate nerve}. Berger and Leinster defined another Euler characteristic $\chi_{\sum}(\mathcal{I})$ of a finite category $\mathcal{I}$, called \textit{series Euler characteristic} \cite{Leib}, in terms of non-degenerate nerves. So it seems to be better to use the series Euler characteristic to investigate the relation between the Euler characteristic of finite categories and the barycentric subdivision of categories. And it coincides Leinster's first Euler characteristic for the important class of finite categories including finite posets and finite groups.

We introduce the Euler characteristic $\chi_{\text{fil}}(\mathcal{A}, \mu)$ of an infinite acyclic category $\mathcal{A}$ with a filtration $\mu$, called $\mathbb{N}$\textit{-filtered acyclic category} (see Definition \ref{n-fil}). For a small category $\mathcal{J}$, its barycentric subdivision Sd$(\mathcal{J})$ is an acyclic category and it naturally has an  $\mathbb{N}$-filtration (see Example \ref{length}).

\begin{mthem}
Let $\mathcal{I}$ be a finite category for which the series Euler characteristic can be defined. Then, $\chi_{\text{fil}}(\mathrm{Sd}(\mathcal{I}), L)$ is also defined and they coincide
$$\chi_{\Sigma}(\mathcal{I})=\chi_{\text{fil}}(\mathrm{Sd}(\mathcal{I}), L),$$
that is, we have the following commutative diagram.
$$\xymatrix{
 \chi_{\sum}\textbf{-categories} \ar[dr]_{\chi_{\sum}} \ar[rr]^(0.45){\mathrm{Sd}}&& \chi_{\text{fil}}\textbf{-categories}   \ar[dl]^{\chi_{\text{fil}}}\\
&\mathbb{Q}
}$$
where $\chi_{\sum} \textbf{-categories}$ denotes the category of finite categories for whicjhthe series Euler  characteristic can be defined and $\chi_{\text{fil}}\textbf{-categories}$ denotes the category of $\mathbb{N}$-filtered acyclic categories for which its Euler characteristic can be defined. 
\end{mthem}

The series Euler characteristic has the invariance under the barycentric subdivision of small categories in the sense of the following commutative diagram.

$$\xymatrix{
&\chi_{\text{fil}}\textbf{-categories} \ar@{.>}[ddl]^(0.65){\chi_\text{fil}}&\textbf{Finite posets}\ar@/^1pc/[ddl]^(0.65){\chi_\text{fil}} \ar@{_{(}->}[l]\ar[r]^(0.5){\text{order} \atop \text{complex}}&{ \displaystyle \textbf{Finite} \atop \displaystyle \textbf{simp. comp.} }\ar[ddl]^{\chi}\\
\chi_{\sum}\textbf{-categories}\ar[ur]^{\mathrm{Sd}}\ar[d]_{\chi_\text{L}}&{\displaystyle \textbf{Finite posets} } \ar[ur]^{\mathrm{Sd}}\ar[d]_{\chi_{\text{Rota}}} \ar@{_{(}->}[l] \ar[r]^(0.5){\text{order} \atop \text{complex}}& { \displaystyle \textbf{Finite} \atop \displaystyle \textbf{simp. comp.} }\ar[d]^{\chi}\ar[ur]^{\mathrm{Sd}}&\\
\mathbb{Q}&\mathbb{Z}\ar@{_{(}->}[l] \ar@{=}[r] &\mathbb{Z}&
}$$

This paper is organized as follows.

In section \ref{Pre}, some conventions and elementary knowledge are recalled. All the keywords in this section can be found in \cite{Koz08}.

In section \ref{sub}, the definition of the barycentric subdivision of small categories is given and some elementary properties are proved. This definition is the one introduced in \cite{DK}.

In section \ref{inv}, we define the Euler characteristic of $\mathbb{N}$-filtered acyclic categories. And we give a proof of the main theorem, theorem \ref{comm}. 

\medskip

\textbf{Acknowledgements.}  I wish to thank Dai Tamaki, Katsuhiko Kuribayashi for very useful discussions and Akihide Hanaki for very helpful suggestions to solve Proposition \ref{Hanaki}. And I also thank Matias Luis del Hoyo who answered my questions about the barycentric subdivision of small categories. 


\section{Preliminaries}\label{Pre}

\begin{conv}
We mean the natural numbers are non-negative integers. So the set of natural numbers $\mathbb{N}$ contains $0$.
$$\mathbb{N}=\{0,1,2,\dots\}$$
$\mathbb{N}$ is regarded as a poset by $0<1<2<\dots$.
\end{conv}

\begin{defi}\label{def}
Define a small category $\mathcal{A}$ to be an \textit{acyclic category} if all the endomorphisms are only identity morphisms and if there exists an arrow $\map{f}{X}{Y}$ such that $X\not = Y$, then there does not exist an arrow $\map{g}{Y}{X}$. 
\end{defi}

\begin{defi}
Let $\mathcal{A}$ be an acyclic category. Define an order on the set of objects of $\mathcal{A}$ by $x \le y$ if there exists a morphism from $x$ to $y$.
\end{defi}

\begin{defi}
Let $\mathcal{J}$ be a small category. The \textit{nerve} $N_*(\mathcal{J})$ of $\mathcal{J}$ is the simplicial set whose set of $n$-simplices $N_n(\mathcal{J})$ is defined as follows \cite{Koz08} \cite{ML98}:
$$N_n( \mathcal{J})=\{ (f_1,f_2,\dots, f_n)\mid\text{each $f_i$ and $f_{i+1}$ are composable} \}$$

 The \textit{non-degenerate nerve} of $\mathcal{J}$ $\nn{*}{J}$ is the $\mathbb{N}$-graded subset of $N_*(\mathcal{J})$ equipped the restrictions of the face operators of $N_*(\mathcal{J})$ 
$$\lmap{d_i}{\nn{n}{J}}{N_{n-1}(\mathcal{J})} $$
and each $\nn{n}{J}$ is defined by the following: 
$$\nn{n}{J}=\{ (f_1,f_2,\dots, f_n) \in N_n( \mathcal{J}) \mid\text{none of $f_i$ is the identity morphism} \}$$
where $\nn{0}{J}$ is defined by $\nn{0}{J}=N_0( \mathcal{J})$. 

\end{defi}

\begin{defi}
Let $\Delta_{\text{inj}}$ be the category whose objects are $$[n]=\{0,1,\dots, n\}$$ for any $0\le n$ and morphisms are order-preserving injections between them. Then, define a \textit{$\Delta$-set} $X$ to be a contravariant functor from $\Delta_{\text{inj}}$ to the category of sets.
$$\lmap{X}{(\Delta_{\text{inj}} )^{\text{op}}}{\textbf{Sets}}$$
\end{defi}

Let $\mathcal{A}$ be an acyclic category. Then, $\nn{*}{A}$ is a $\Delta$-set, since it is closed under the operation of the face operators. Moreover, $\nn{*}{-}$ is a functor from the category of acyclic categories to the category of $\Delta$-sets. The morphisms in the formar are functors $\map{F}{\mathcal{A}}{\mathcal{B}}$ between acyclic categories $\mathcal{A, B}$ satisfying $F(x)<F(y)$ in Ob$(\mathcal{B})$ for $x<y$ in Ob$(\mathcal{A})$. The category of the latter consists of $\Delta$-sets as its objects and natural transformations as its morphisms. 

$$\nn{*}{-}: \textbf{Acyclic categories}\longrightarrow \Delta\textbf{-sets}$$

\begin{exam}
Let $\mathcal{A}$ be an acyclic category and $\map{\mu}{\mathcal{A}}{\mathbb{N}}$ be a functor satisfying $\mu (x)<\mu (y)$ in Ob$(\mathcal{B})$ for $x<y$ in Ob$(\mathcal{A})$. By applying the functor $\nn{*}{-}$ to $\mu$, we have the natural transformation $\map{\nn{*}{\mu}}{\nn{*}{A}}{\nn{*}{\mathbb{N}}}$. Here, each $\nn{n}{\mathbb{N}}$ is the set of proper increasing sequences of $(n+1)$-natural numbers. 
$$\nn{n}{\mathbb{N}}=\{(i_0,i_1,\dots, i_n) \in \mathbb{N}^{n+1}  \mid i_0<i_1<\dots < i_n\}$$
For $$\mathbf{f}=\xymatrix{(x_0\ar[r]^{f_1}&x_1\ar[r]^{f_2}&\dots\ar[r]^{f_n}&x_n)}$$
of $\nn{n}{A}$, $\nn{n}{\mu}(\mathbf{f})=(\mu(x_0),\mu(x_1),\dots, \mu(x_n))$.
\end{exam}

\begin{conv} Let $\mathcal{J}$ be a small category and $X$ be an element of $N_n(\mathcal{J})$. We often denote the length of $X$ by $q_X$, that is, $q_X =n$. Then, $X$ has the following form. 
$$\xymatrix{X=(x_0\ar[r]^(0.6){f_1}&x_1\ar[r]^{f_2}&\dots\ar[r]^{f_{q_X}}&x_{q_X})}$$
\end{conv}


\section{The barycentric subdivision of small categories}\label{sub}

Let us recall the definition of the barycentric subdivision of small categories from \cite{DK}.

\begin{defi}
Let $\mathcal{J}$ be a small category. Then, \textit{the barycentric subdivision} $\mathrm{Sd}(\mathcal{J})$ of $\mathcal{J}$ is a small category whose objects are elements of the non-degenerate nerve of $\mathcal{J}$ and the set of morphisms between $X$ and $Y$ is the quotient set of order-preserving maps $\map{f}{[q_X]}{[q_Y]}$ satisfying $Y\circ f=X$ under the relation defined below. Here, $X$ and $Y$ are regarded as functors from posets $[q_X]$ and $[q_Y]$ to $\mathcal{J}$ respectively. So the condition $Y\circ f=X$ means the commutative diagram 
$$\xymatrix{
&\mathcal{J}&\\
[q_X]\ar[ru]^X\ar[rr]^f&&[q_Y]\ar[lu]_Y
}$$
in the category of small categories.

The equivalence relation is generated by the following relation: Given order-preserving maps $\map{f,g}{[q_X]}{[q_Y]}$ satisfying $Y\circ f=X, Y\circ g=X$ respectively. Define $f\sim g$ if for any $0\le i \le q_X$, $Y(\min\{f(i), g(i)\}\rightarrow \max\{f(i), g(i)\})$ is an identity morphism. Here, $$\min\{f(i), g(i)\}\rightarrow \max\{f(i), g(i)\}$$ is a morphism in $[q_Y]$. The composition in Sd$(\mathcal{J})$ is defined by the composition of order-preserving maps.
\end{defi}

We would like to use the properties stated in \cite{dH}, but the definition above is different from the one defined in \cite{dH}. So we give proofs of them here.

\begin{lemma}
The relation given above is an equivalence relation and compatible with the composition, that is, if $[f]=[f']$ and $[g]=[g']$ and they are composable, then $[g\circ f]=[g'\circ f']$.
\begin{proof}
It is easy to show that the relation is an equivalence relation.

To prove the second statement it suffices to show that if $f\sim f'$, then $g\circ f \sim g\circ f'$ and if $g\sim g'$, then $g\circ f \sim g'\circ f$, but the latter one is clear. Suppose $f \sim f'$ and $g$ is composable with them and we have the following diagram
$$\xymatrix{
&\mathcal{J}& \\
[q_X]\ar[ru]^X\ar@<1ex>[r]^f \ar@<-1ex>[r]_{f'}&[q_Y]\ar[u]_Y\ar[r]^g&[q_Z]\ar[lu]_Z.
}$$
For any $0\le i \le q_X$, we have 
\begin{eqnarray*}
Z(\min\{g\circ f(i), g\circ f'(i)\}\rightarrow \max\{g\circ f(i), g\circ f'(i)\}) & = & \\
Z\circ g(\min\{f(i), f'(i)\}\rightarrow \max\{f(i),f'(i)\})&=&  \\
Y(\min\{f(i), f'(i)\}\rightarrow \max\{f(i),f'(i)\})&=&1.
\end{eqnarray*}
We conclude the relation is compatible with the composition.
\end{proof}
\end{lemma}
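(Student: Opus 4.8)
The plan is to treat the two assertions in turn: first that the relation is an equivalence relation, and then that it descends to composition. For the equivalence-relation claim, reflexivity and symmetry are immediate—$\min\{f(i),f(i)\}=\max\{f(i),f(i)\}$ makes every $Y(\min\{f(i),f(i)\}\to\max\{f(i),f(i)\})$ an identity, and the defining condition is visibly symmetric in $f$ and $g$—so the only real content is transitivity. Here I would fix an index $i$ and set $a=f(i)$, $b=g(i)$, $c=h(i)$ in $[q_Y]$ for three maps with $f\sim g$ and $g\sim h$, and examine the three comparison morphisms joining the pairs $\{a,b\}$, $\{b,c\}$, $\{a,c\}$ in the totally ordered set $[q_Y]$. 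The observation that makes the argument go is that the morphism joining the two \emph{extreme} values among $a,b,c$ is the composite of the other two; since $Y$ sends the comparison morphisms of $\{a,b\}$ and $\{b,c\}$ to identities by hypothesis, functoriality of $Y$ forces the comparison morphism of $\{a,c\}$ to an identity as well, whether that morphism happens to be the long one or a factor of the long one. As $i$ was arbitrary, this gives $f\sim h$.

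For compatibility with composition, I would reduce the statement ``$[f]=[f']$ and $[g]=[g']$ imply $[g\circ f]=[g'\circ f']$'' to the two one-sided statements $f\sim f'\Rightarrow g\circ f\sim g\circ f'$ and $g\sim g'\Rightarrow g\circ f\sim g'\circ f$, and then chain them by transitivity along $g\circ f\sim g'\circ f\sim g'\circ f'$. The second one-sided statement is immediate: evaluating the hypothesis $g\sim g'$ at the index $f(i)\in[q_Y]$ is exactly the condition defining $g\circ f\sim g'\circ f$. For the first, the key point is that since $g$ is order-preserving it intertwines $\min$ and $\max$, so the morphism $\min\{g\circ f(i),g\circ f'(i)\}\to\max\{g\circ f(i),g\circ f'(i)\}$ in $[q_Z]$ is the image under $g$ of the morphism $\min\{f(i),f'(i)\}\to\max\{f(i),f'(i)\}$ in $[q_Y]$; applying $Z$ and using $Z\circ g=Y$ reduces the required identity to $Y(\min\{f(i),f'(i)\}\to\max\{f(i),f'(i)\})=\mathrm{id}$, which holds precisely because $f\sim f'$.

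The main obstacle is the transitivity step. Reflexivity, symmetry, and the entire compatibility argument are essentially formal once one exploits the constraints $Y\circ f=X$, $Z\circ g=Y$ and the fact that order-preserving maps commute with $\min/\max$. Transitivity, by contrast, is the one place that genuinely requires relating the three comparison morphisms through a position analysis of $f(i),g(i),h(i)$ and a careful appeal to functoriality of $Y$; all remaining work is bookkeeping resting on the single fact that in each totally ordered set $[q_X],[q_Y],[q_Z]$ the comparison morphism between extreme endpoints factors through any intermediate one.
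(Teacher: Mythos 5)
Your proposal is correct and follows essentially the same route as the paper: the same reduction of compatibility to the two one-sided statements $f\sim f'\Rightarrow g\circ f\sim g\circ f'$ and $g\sim g'\Rightarrow g\circ f\sim g'\circ f$, with the first handled by the identical computation using $Z\circ g=Y$ and the fact that order-preserving maps commute with $\min$ and $\max$. The only difference is that you spell out the transitivity of $\sim$ via the factorization of the extreme comparison morphism through the middle value, a verification the paper dismisses as ``easy to show''; your argument for it is sound.
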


\begin{lemma}\label{inj}
Let $\mathcal{J}$ be a small category. For any morphism $\map{[f]}{X}{Y}$ in Sd$(\mathcal{J})$, $\map{f}{[q_X]}{[q_Y]}$ is an injection.
\begin{proof}
Suppose $f$ is not an injection. Then, there exist $i,j$ such that $f(i)=f(j)$ but $i\not =j$. Suppose $i<j$. Then, we have the inequality $i<i+1\le j$. Since $f$ is an order-preserving map, we have $f(i)\le f(i+1)\le f(j)$. Since $f(i)=f(j)$, so $f(i)=f(i+1)$.

For the morphism $i\rightarrow i+1$ in $[q_X]$, we apply the commutative diagram $X=Y\circ f$. Then, we have
\begin{eqnarray*}
X(i\rightarrow i+1) & = & Y\circ f(i\rightarrow i+1)\\
&=& Y(f(i)\rightarrow f(i)) \\
&=&1_{Y(f(i))}.
\end{eqnarray*}
But $X$ is an element of the non-degenerate nerve. So $f$ must be an injection.
\end{proof}
\end{lemma}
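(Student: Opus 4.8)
The plan is to argue by contradiction, exploiting the defining property of the non-degenerate nerve together with the commutativity relation $Y\circ f=X$. Suppose that $f$ fails to be injective, so that $f(i)=f(j)$ for some pair of indices $i<j$ in $[q_X]$. The first step is to reduce this to a collapse of two adjacent indices: since $f$ is order-preserving and $i<i+1\le j$, monotonicity forces $f(i)\le f(i+1)\le f(j)=f(i)$, whence $f(i)=f(i+1)$.

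Next I would feed the elementary morphism $i\rightarrow i+1$ of $[q_X]$ into the equation $X=Y\circ f$ of functors. Because $f(i)=f(i+1)$, the image $f(i\rightarrow i+1)$ is the identity of $[q_Y]$ at $f(i)$, so the composite sends $i\rightarrow i+1$ to $Y(1_{f(i)})=1_{Y(f(i))}$. Hence the component of the chain $X$ from $x_i$ to $x_{i+1}$, namely the morphism $X(i\rightarrow i+1)=f_{i+1}$ in $\mathcal{J}$, is an identity morphism.

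This is precisely what is forbidden: by definition $X\in\nn{q_X}{J}$ lies in the non-degenerate nerve, so none of its component morphisms $f_1,\dots,f_{q_X}$ is an identity. The resulting contradiction shows that no such pair $i<j$ can exist, and therefore $f$ is injective.

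I expect the argument to be short and essentially free of obstacles; the only point requiring a little care is the initial reduction from an arbitrary coincidence $f(i)=f(j)$ to the adjacent case $f(i)=f(i+1)$, which is where order-preservation of $f$ is genuinely used. Note also that the conclusion is independent of the chosen representative of the class $[f]$, since the argument applies verbatim to any order-preserving lift satisfying $Y\circ f=X$.
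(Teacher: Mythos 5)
Your proposal is correct and follows exactly the same route as the paper's proof: assume non-injectivity, use order-preservation to reduce to an adjacent collapse $f(i)=f(i+1)$, apply $X=Y\circ f$ to the morphism $i\rightarrow i+1$, and derive a contradiction with the non-degeneracy of $X$. The closing remark about independence of the representative of $[f]$ is a sensible extra observation but does not change the argument.
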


\begin{prop}\label{acyclic}
For any small category $\mathcal{J}$, Sd$(\mathcal{J})$ is an acyclic category.
\begin{proof}
Given an endomorphism $\map{[i]}{X}{X}$, $i$ is an order-preserving map from $[q_X]$ to $[q_X]$. Lemma $\ref{inj}$ implies $i$ is an injection. The only order preserving injection from $[q_X]$ to itself is the  identity map. So all the endomorphisms of Sd$(\mathcal{J})$ are identity morphisms.

Let $\map{[f]}{X}{Y}$ be a morphism such that $X\not = Y$. Since $f$ is an injection, $q_X\le q_Y$. If there exists $\map{[g]}{Y}{X}$, then we have $q_Y\le q_X$. Therefore, we obtain $q_X=q_Y$. The same argument above implies $f$ is the identity map, but $X\not = Y$. 

Thus, we conclude that Sd$(\mathcal{J})$ is an acyclic category.
\end{proof}
\end{prop}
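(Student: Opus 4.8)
The plan is to verify directly the two defining conditions of an acyclic category from Definition~\ref{def}, using Lemma~\ref{inj} as the essential input. The combinatorial fact I will lean on throughout is elementary: the only order-preserving injection from a finite totally ordered set $[n]$ to itself is the identity map, since such a map must fix $0$, then $1$, and so on by an easy induction. Everything reduces to combining this observation with the injectivity supplied by Lemma~\ref{inj}.

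First I would dispose of the endomorphisms. Given any endomorphism $\map{[i]}{X}{X}$ in $\mathrm{Sd}(\mathcal{J})$, its representative $\map{i}{[q_X]}{[q_X]}$ is order-preserving by definition and injective by Lemma~\ref{inj}. By the combinatorial fact above, $i$ is the identity map, so $[i]$ is the identity morphism of $X$. Hence every endomorphism in $\mathrm{Sd}(\mathcal{J})$ is an identity, which settles the first requirement.

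Next I would rule out antiparallel arrows between distinct objects. Suppose $\map{[f]}{X}{Y}$ is a morphism with $X\not=Y$; by Lemma~\ref{inj} the map $f$ is an injection, which forces $q_X\le q_Y$ by comparing the cardinalities $q_X+1$ and $q_Y+1$. If there also existed a morphism $\map{[g]}{Y}{X}$, then $g$ would be injective as well, giving $q_Y\le q_X$, and therefore $q_X=q_Y$. In that case $f$ is an order-preserving injection between finite totally ordered sets of equal size, hence the identity map, and the defining relation $Y\circ f=X$ collapses to $Y=X$ as functors, contradicting $X\not=Y$. This establishes the second requirement and completes the argument.

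I do not expect a serious obstacle here: once Lemma~\ref{inj} is in hand, the proposition is essentially bookkeeping, and the genuine work---extracting injectivity from the non-degeneracy of the objects of $\mathrm{Sd}(\mathcal{J})$---has already been done in that lemma. If there is any subtle point, it is only to remember that the equality $f=\mathrm{id}$ must be promoted, through $Y\circ f=X$, to an equality of the \emph{objects} $X$ and $Y$, so that the contradiction with $X\not=Y$ is with the objects themselves and not merely with the chosen representatives.
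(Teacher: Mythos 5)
Your proposal is correct and follows essentially the same route as the paper's own proof: both parts rest on Lemma~\ref{inj} together with the observation that the only order-preserving injection of $[n]$ into itself is the identity, first to kill endomorphisms and then, via the cardinality comparison $q_X\le q_Y\le q_X$, to rule out antiparallel arrows. Your closing remark---that $f=\mathrm{id}$ must be promoted through $Y\circ f=X$ to the equality of objects $X=Y$---makes explicit a step the paper leaves tacit (``$f$ is the identity map, but $X\not=Y$''), so your write-up is, if anything, marginally more complete.
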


\begin{prop}\label{End}
Let $\mathcal{J}$ be a small category. Then, $\mathrm{Sd}(\mathcal{J})$ is a poset if and only if $\mathrm{End}(x)=1$ for any object $x$ of $\mathcal{J}$.
\begin{proof}
Suppose Sd$(\mathcal{J})$ is a poset. If $\map{i}{x}{x}$ is not an identity map, then we have two maps $\map{i_0, i_1}{[0]}{[1]}$ defined by $i_k(0)=k$ for $k=0,1$ and they are morphisms from $x$ to $i$ in Sd$(\mathcal{J})$.
$$\xymatrix{
&\mathcal{J}&\\
[0]\ar[ru]^x\ar@<1ex>[rr]^{i_0}\ar@<-1ex>[rr]_{i_1}&&[1]\ar[lu]_i
}$$
But they are not equivalent. Indeed, we have
\begin{eqnarray*}
i(\min\{i_0(0), i_1(0)\}\rightarrow \max\{i_0(0), i_1(0)\}) & = & i(0\rightarrow 1)\\
&=& i \\
&\not =&1 
\end{eqnarray*}
So we have $\#\text{Hom}_{\mathrm{Sd}(\mathcal{J})}(x,i)\ge 2$ where $\#$ means the cardinality of sets. This contradicts to the fact that Sd($\mathcal{J}$) is a poset. 

Conversely, suppose End$(x)=1$ for all objects $x$ of $\mathcal{J}$. Then, it suffices to show that $\#\text{Hom}_{\mathrm{Sd}(\mathcal{J})}(X,Y)\le 1$ for any $X,Y$. Take two morphisms $\map{[f],[g]}{X}{Y}$. Then, $f$ is equivalent to $g$. Indeed, for any $0\le i\le q_X$, the commutative diagrams $X=Y\circ f=Y\circ g$ imply $X(i)=Y\circ f(i)=Y\circ g(i)$. So we have 
$$Y(\min\{f(i), g(i)\}\rightarrow \max\{f(i), g(i)\}) \in \text{End}(X(i)).$$
Since End$(X(i))=1$, $f$ is equivalent to $g$.
\end{proof}
\end{prop}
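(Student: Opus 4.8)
The plan is to prove both implications directly from the description of morphisms in $\mathrm{Sd}(\mathcal{J})$, exploiting the single structural fact that the constraint $Y\circ f = X$ pins down objects: for every order-preserving $f\colon [q_X]\to [q_Y]$ representing a morphism $X\to Y$ and every $i$, one has $Y(f(i))=X(i)$. The unifying observation I would isolate is that if $[f],[g]\colon X\to Y$ are two parallel morphisms, then for each $i$ the comparison morphism $Y(\min\{f(i),g(i)\}\to \max\{f(i),g(i)\})$ appearing in the defining relation has source and target both equal to $X(i)$, so it is an endomorphism of $X(i)$ in $\mathcal{J}$. Hence $f\sim g$ holds exactly when all of these endomorphisms are identities, which is precisely what ties the equivalence relation to the hypothesis $\mathrm{End}(x)=1$. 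I would also invoke Proposition \ref{acyclic}: since $\mathrm{Sd}(\mathcal{J})$ is already acyclic, antisymmetry is automatic, so to check it is a poset it suffices to verify $\#\mathrm{Hom}_{\mathrm{Sd}(\mathcal{J})}(X,Y)\le 1$ for all $X,Y$.

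For the implication $\mathrm{End}(x)=1$ for all $x \Rightarrow \mathrm{Sd}(\mathcal{J})$ is a poset, I would take arbitrary parallel morphisms $[f],[g]\colon X\to Y$ and show $f\sim g$. By the observation above each comparison morphism lies in $\mathrm{End}(X(i))=\{1\}$, hence equals the identity, so $f\sim g$ by the definition of the relation; therefore there is at most one morphism $X\to Y$.

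For the converse I would argue by contrapositive: assuming a non-identity endomorphism $i\colon x\to x$ exists, I would exhibit two distinct parallel morphisms in $\mathrm{Sd}(\mathcal{J})$. Since $i\ne 1$ it is a non-degenerate $1$-simplex, hence an object of $\mathrm{Sd}(\mathcal{J})$, while $x$ is the object given by the corresponding $0$-simplex. The two order-preserving maps $i_0,i_1\colon [0]\to [1]$ with $i_k(0)=k$ both satisfy $i\circ i_k = x$, because the source and target of $i$ coincide, so each represents a morphism $x\to i$; but they are inequivalent, since the comparison morphism is $i(0\to 1)=i\ne 1$. Thus $\#\mathrm{Hom}(x,i)\ge 2$ and $\mathrm{Sd}(\mathcal{J})$ is not a poset.

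I do not expect a serious obstacle, as the argument is essentially a direct unwinding of the definition of $\mathrm{Sd}$. The point that deserves care, and which I would emphasize as the crux, is the bookkeeping in the forward direction: one must notice that $Y\circ f = X = Y\circ g$ forces $Y(f(i))=Y(g(i))=X(i)$, so that the terms governing the equivalence relation are genuinely endomorphisms of objects of $\mathcal{J}$ rather than arbitrary morphisms; only then does the hypothesis $\mathrm{End}=1$ apply. In the converse direction the mild subtlety is merely choosing the right witness pair, namely the two vertex-inclusions of a non-identity endomorphism regarded as a $1$-simplex.
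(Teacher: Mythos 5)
Your proposal is correct and follows essentially the same route as the paper: the same witness pair $i_0,i_1\colon[0]\to[1]$ for a non-identity endomorphism (you phrase it as a contrapositive, the paper as a contradiction, which is the same argument), and the same key observation that $Y\circ f=X=Y\circ g$ forces each comparison morphism into $\mathrm{End}(X(i))$. Your explicit appeal to Proposition \ref{acyclic} to reduce posetness to $\#\mathrm{Hom}_{\mathrm{Sd}(\mathcal{J})}(X,Y)\le 1$ makes precise a step the paper leaves implicit, but it is the same proof.
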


\begin{coro}
Let $\mathcal{J}$ be a small category. Then, $\mathrm{Sd}^2(\mathcal{J})$ is a poset.
\begin{proof}
Lemma $\ref{acyclic}$ implies Sd($\mathcal{J}$) is an acyclic category. And Proposition $\ref{End}$ implies $\mathrm{Sd}^2(\mathcal{J})$ is a poset.
\end{proof}
\end{coro}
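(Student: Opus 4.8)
The plan is to obtain the result by composing the two immediately preceding results, Proposition \ref{acyclic} and Proposition \ref{End}, with the observation that acyclicity is exactly what supplies the hypothesis needed to apply the latter. The key structural remark is that Proposition \ref{End} characterizes when $\mathrm{Sd}(\mathcal{C})$ is a poset in terms of a property of $\mathcal{C}$ (namely $\mathrm{End}(x)=1$ for all objects $x$), so to analyze $\mathrm{Sd}^2(\mathcal{J})=\mathrm{Sd}(\mathrm{Sd}(\mathcal{J}))$ I would apply that proposition not to $\mathcal{J}$ but to the category $\mathrm{Sd}(\mathcal{J})$.

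First I would invoke Proposition \ref{acyclic} to conclude that $\mathrm{Sd}(\mathcal{J})$ is an acyclic category. The purpose of this step is that the defining condition of an acyclic category (Definition \ref{def}) requires every endomorphism to be an identity morphism; consequently, for every object $X$ of $\mathrm{Sd}(\mathcal{J})$ we have $\mathrm{End}(X)=1$. Next I would feed $\mathrm{Sd}(\mathcal{J})$ into Proposition \ref{End} in the role of the small category. Since the hypothesis $\mathrm{End}(X)=1$ for all objects $X$ has just been verified, that proposition yields directly that $\mathrm{Sd}(\mathrm{Sd}(\mathcal{J}))=\mathrm{Sd}^2(\mathcal{J})$ is a poset, which is the claim.

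I do not expect any genuine obstacle here: the entire content of the corollary is packaged into the two prior statements, and the argument is simply their sequential application. The only point demanding a little care is the bookkeeping of which category each result is applied to, ensuring that the trivial-endomorphism condition required by Proposition \ref{End} is checked for $\mathrm{Sd}(\mathcal{J})$ rather than for $\mathcal{J}$ itself; this is precisely the role played by the intermediate acyclicity conclusion of Proposition \ref{acyclic}.
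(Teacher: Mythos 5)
Your proposal is correct and follows exactly the paper's own two-line argument: Proposition \ref{acyclic} gives that $\mathrm{Sd}(\mathcal{J})$ is acyclic (hence $\mathrm{End}(X)=1$ for every object $X$), and Proposition \ref{End} applied to $\mathrm{Sd}(\mathcal{J})$ then yields that $\mathrm{Sd}^2(\mathcal{J})$ is a poset. Your only addition is to spell out the bookkeeping point---that the trivial-endomorphism hypothesis is checked for $\mathrm{Sd}(\mathcal{J})$ rather than $\mathcal{J}$---which the paper leaves implicit.
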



\section{The invariance of the Euler characteristic}\label{inv}

\begin{defi}\label{n-fil}
Let $\mathcal{A}$ be an acyclic category. A functor $\map{\mu}{\mathcal{A}}{\mathbb{N}}$ satisfying $\mu (x)<\mu (y)$ in Ob$(\mathcal{B})$ for $x<y$ in Ob$(\mathcal{A})$, $\mu (x) < \mu (y)$ is called \textit{an $\mathbb{N}$-filtration of $\mathcal{A}$}. A pair $(\mathcal{A}, \mu )$ is called \textit{an $\mathbb{N}$-filtered acyclic category}.
\end{defi}

\begin{exam}\label{length}
Let $\mathcal{J}$ be a small category. Proposition \ref{acyclic} implies Sd$(\mathcal{J})$ is an acyclic category. The length functor $L$ gives a natural $\mathbb{N}$-filtration to Sd$(\mathcal{J})$ where the functor $L$ is defined by $L(\mathbf{f})=n$ for $\mathbf{f}$ of $\overline{N_n}( \mathcal{J})$. Thus, we obtain an $\mathbb{N}$-filtered acyclic category $(\mathrm{Sd}(\mathcal{J}),L)$.
\end{exam}

We have the following commutative diagram.

$$\xymatrix{
\mathbb{Z}[t] \ar@{^{(}->}[d] \ar@{^{(}->}[r]&\mathbb{Z}[[t]] \ar@{^{(}->}[d]&\\
\mathbb{Q}(t)\ar[r]\ar@{^{(}->}[r]&\mathbb{Q}((t))
}$$
Here, $\mathbb{Z}[t]$ is the polynomial ring with the coefficients in $\mathbb{Z}$ and $\mathbb{Z}[[t]]$ is the ring of formal power series over $\mathbb{Z}$. $\mathbb{Q}(t)$ and $\mathbb{Q}((t))$ are the quotient fields of them respectively. 

\begin{defi}
Let $f(t)$ be a formal power series over $\mathbb{Z}$. If there exists a rational function $g(t)/h(t)$ in $\mathbb{Q}(t)$ such that $f(t)=g(t)/h(t)$ in $\mathbb{Q}((t))$, then define 
$$ f|_{t=-1}= g(-1)/h(-1) \in \mathbb{Q}$$
if $h(-1) \not = 0.$
\end{defi}

\begin{defi}
Let $(\mathcal{A}, \mu)$ be an $\mathbb{N}$-filtered acyclic category. Then, define $\chi_{\text{fil}}(\mathcal{A}, \mu)$ as follows. 

We have the pair of the $\Delta$-set and the natural transformation $$(\overline{N_*}(\mathcal{A}), \overline{N_*}(\mu)).$$ Let
$$\overline{N_i}(\mathcal{A})_n=\{ \mathbf{f}\in \overline{N_i}(\mathcal{A})\mid \max(\overline{N_i}(\mu)(\mathbf{f}))=n  \}$$
for natural numbers $i,n$. Suppose each $\overline{N_i}(\mathcal{A})_n$ is finite and $\overline{N_i}(\mathcal{A})_n$ is an empty-set if $n<i$. Define the formal power series $f_{\chi}(\mathcal{A}, \mu)(t)$ over $\mathbb{Z}$ by

$$f_{\chi}(\mathcal{A}, \mu)(t)=\sum^\infty_{n=0}(-1)^n\left(\sum^n_{i=0}(-1)^i\# \overline{N_i}(\mathcal{A})_n \right)t^n$$
where the symbol $\#$ means the cardinality of sets. And define
$$\chi_{\text{fil}}(\mathcal{A}, \mu)=f_{\chi}(\mathcal{A}, \mu)(t)|_{t=-1}$$
if it exists. 
\end{defi}

\begin{exam}
Equip the poset $\mathbb{N}$ with the identity functor Id as its $\mathbb{N}$-filtration. Then, we have
\begin{eqnarray*}
\overline{N_i}(\mathbb{N})_n&=&\{ \mathbf{m}=(m_0,m_1,\dots, m_i) \in \overline{N_i}(\mathbb{N}) \mid \max(\overline{N_i}(\text{Id})(\mathbf{m}))=n \}\\
&=&\{(m_0,m_1,\dots, m_{i-1}, n) \mid 0\le m_0< m_1< \dots < m_{i-1}<  n) \}
\end{eqnarray*}
for any $i, n$. Therefore, we obtain $\#\overline{N_i}(\mathbb{N})_n$ is $ n \choose i$. Thus, we have 
\begin{eqnarray*}
f_{\chi}(\mathbb{N}, \text{Id})(t)&=&\sum^\infty_{n=0}(-1)^n\left(\sum^n_{i=0}(-1)^i\# \overline{N_i}(\mathbb{N})_n \right)t^n \\
&=&\sum^\infty_{n=0}(-1)^n\left(\sum^n_{i=0}(-1)^i {n \choose i} \right)t^n \\
&=&1.
\end{eqnarray*}
So we have 
$$\chi_{\text{fil}}(\mathbb{N}, \text{Id})=1.$$
Since the classifying space of $\mathbb{N}$ is contractible, from a topological viewpoint, the Euler characteristic of $\mathbb{N}$ should be $1$, too.
\end{exam}


The rest of this section is devoted to the proof of our main theorem. We reduce it to a combinatorial problem in the form of the following Proposition. 
\begin{prop}\label{Hanaki}
Let $n$ be a natural number. Suppose $\sim$ is an equivelence relation on $[n]$ with the property that if $i\sim j$, then $i+1\not = j$ and $i\not = j+1$. Let 
$$A_k^{(n)}=\{(i_0,i_1, \dots , i_k) \in [n]^{k+1}\mid i_0 < \dots <i_k \}$$
and
$$B_k^{(n)}=\{(i_0,i_1, \dots , i_k) \in A_k^{(n)}\mid \exists m \text{ s.t. } i_m \sim i_{m+1}\}$$
and 
$$C_k^{(n)}=(A_k^{(n)}-B_k^{(n)})/\approx$$ 
where $ (i_0,i_1, \dots , i_k) \approx (j_0,j_1, \dots , j_k)$ is defined by $i_m \sim j_m$ for any $m$. Let $A_{-1}^{(n)}=\{ () \} \cong *$, $B_{-1}^{(n)}= \emptyset$ and $ C_{-1}^{(n)}=*$. Let $\beta_k^{(n)}=\#B_k^{(n)}$ and $\gamma_k^{(n)}=\sum_{[x]\in C_k^{(n)}} (\#[x]-1)$. Then, we have
$$\sum^{n-1}_{k=0}(-1)^k(\beta_k^{(n)}+\gamma_k^{(n)})=0.$$
\end{prop}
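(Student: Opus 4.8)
The plan is to first simplify the summand algebraically and then evaluate the two resulting pieces separately. Observe that $\gamma_k^{(n)} = \sum_{[x]\in C_k^{(n)}}(\#[x]-1) = \#(A_k^{(n)}-B_k^{(n)}) - \#C_k^{(n)}$, since the sum simply adds up all elements of $A_k^{(n)}-B_k^{(n)}$ and subtracts one for each equivalence class. Writing $\#A_k^{(n)} = \binom{n+1}{k+1}$ and $\#(A_k^{(n)}-B_k^{(n)}) = \binom{n+1}{k+1} - \beta_k^{(n)}$, the $\beta_k^{(n)}$ terms cancel and I obtain the clean identity $\beta_k^{(n)}+\gamma_k^{(n)} = \binom{n+1}{k+1} - \#C_k^{(n)}$. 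Hence the target sum splits as $\sum_{k=0}^{n-1}(-1)^k\binom{n+1}{k+1} - \sum_{k=0}^{n-1}(-1)^k\#C_k^{(n)}$, where the first summand is a routine binomial computation giving $1-(-1)^n$. Everything therefore reduces to showing $\sum_{k=0}^{n-1}(-1)^k\#C_k^{(n)} = 1-(-1)^n$.

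Next I would reinterpret $C_k^{(n)}$. Two tuples in $A_k^{(n)}-B_k^{(n)}$ are $\approx$-equivalent exactly when they have the same sequence of $\sim$-classes, so $C_k^{(n)}$ is in bijection with the set of \emph{colour patterns}: sequences $(\bar\imath_0,\dots,\bar\imath_k)$ of $\sim$-classes with $\bar\imath_m \ne \bar\imath_{m+1}$ for all $m$ that are \emph{realizable}, i.e.\ arise from some strictly increasing tuple $i_0<\dots<i_k$. Since the pattern of length $n+1$ is unique and contributes $\#C_n^{(n)}=1$, it is equivalent but cleaner to prove $\sum_{k=0}^{n}(-1)^k\#C_k^{(n)} = 1$; and incorporating the empty pattern $C_{-1}^{(n)}=*$ with sign $(-1)^{-1}$ turns this into the homogeneous statement $\sum_{k=-1}^{n}(-1)^k\#C_k^{(n)} = 0$.

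The heart of the argument is a sign-reversing, fixed-point-free involution $\Phi$ on the set of all realizable colour patterns (including the empty one). Let $c^\ast = [0]$ be the $\sim$-class of $0$. Define $\Phi$ to delete the leading class if the pattern begins with $c^\ast$, and to prepend $c^\ast$ otherwise. This changes length by one, hence reverses the sign $(-1)^k$, and it has no fixed points; that $\Phi$ is an involution follows because the adjacency condition $\bar\imath_0\ne\bar\imath_1$ guarantees that deleting a leading $c^\ast$ leaves a pattern not starting with $c^\ast$, while prepending $c^\ast$ to a pattern whose first class differs from $c^\ast$ respects distinctness of adjacent classes. The one point needing care — and the main obstacle — is that $\Phi$ must preserve realizability: deletion is harmless, but prepending $c^\ast$ requires producing a strictly increasing realization whose first entry is $0$. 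This works precisely because any class other than $c^\ast=[0]$ consists entirely of integers $\ge 1$, so a realization $i_0<\dots<i_k$ of a pattern with $\bar\imath_0\ne c^\ast$ has $i_0\ge 1$ and may be extended by $0\in c^\ast$ on the left; the same bound also shows the prepended pattern still has length $\le n+1$, so $\Phi$ stays within the range $-1\le k\le n$.

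Finally I would assemble the pieces. The involution gives $\sum_{k=-1}^{n}(-1)^k\#C_k^{(n)}=0$, and since $\#C_{-1}^{(n)}=1$ this yields $\sum_{k=0}^{n}(-1)^k\#C_k^{(n)}=1$; removing the $k=n$ term (where $\#C_n^{(n)}=1$ with sign $(-1)^n$) gives $\sum_{k=0}^{n-1}(-1)^k\#C_k^{(n)}=1-(-1)^n$, matching the binomial sum computed above. Substituting both values into the split expression, the two copies of $1-(-1)^n$ cancel and the claimed identity $\sum_{k=0}^{n-1}(-1)^k(\beta_k^{(n)}+\gamma_k^{(n)})=0$ follows.
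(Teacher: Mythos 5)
Your proof is correct, and it takes a genuinely different route from the paper's. Both arguments share the same opening reduction: $\gamma_k^{(n)}=\#(A_k^{(n)}-B_k^{(n)})-\#C_k^{(n)}$ gives $\beta_k^{(n)}+\gamma_k^{(n)}=\binom{n+1}{k+1}-\#C_k^{(n)}$, and since $\#C_{-1}^{(n)}=\#C_n^{(n)}=1$, everything reduces to $\sum_{k=-1}^{n}(-1)^k\#C_k^{(n)}=0$. From there the paper argues by induction on $n$: it selects the lexicographically minimal representative of each $\approx$-class (the injection $M^{(n)}$), peels off the \emph{top} element via the appending map $\psi_n(i_0,\dots,i_k)=(i_0,\dots,i_k,n)$, and uses the auxiliary index $n_0=\max\{m<n\mid m\sim n\}$ to obtain the decomposition $\im{M^{(n)}}=\im{M^{(n-1)}}\cup\psi_n(\im{M^{(n-1)}}-\im{M^{(n_0)}})$, so the alternating sum collapses to the same sum for $n_0<n$. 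You instead identify $C_k^{(n)}$ with realizable colour patterns and cancel everything in one stroke with a global sign-reversing, fixed-point-free involution at the \emph{bottom} element: delete or prepend the class $[0]$. Your three verifications are exactly the right ones and all sound: prepending $0$ preserves strict increase because every class other than $[0]$ lies in $\{1,\dots,n\}$; it preserves the no-adjacent-equivalence condition because $[0]\ne\bar\imath_0$ and distinct classes are never $\sim$-related; and it cannot overflow the range $-1\le k\le n$ because a realization with $i_0\ge 1$ has at most $n$ entries. What your approach buys is the elimination of the induction, of the choice of minimal representatives, and of the $n_0$ bookkeeping; it also isolates the single point where the hypothesis on $\sim$ enters, namely $(0,1,\dots,n)\notin B_n^{(n)}$, i.e.\ $\#C_n^{(n)}=1$ — a step you assert without citing the hypothesis and should flag, since the Proposition fails without it (e.g.\ $n=1$ with $0\sim 1$ gives $\beta_0^{(1)}+\gamma_0^{(1)}=1$). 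Morally the two proofs are cousins — the paper's $\psi_n$-pairing is a recursive, top-element analogue of your involution — but yours is the more economical packaging.
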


We assume this Proposition and we are going to complete the proof of the main theorem. The proof of this Proposition is given later.

Let $\mathcal{I}$ be a finite category. We denote 
$$\nn{i}{\sd{J}}_{\mathbf{f}}=\{ \xymatrix{(\mathbf{f}_0\ar[r]^(0.6){\varphi_1}&\mathbf{f}_1\ar[r]^{\varphi_2}&\dots\ar[r]^{\varphi_i}&\mathbf{f}_i)} \in \nn{i}{\sd{J}}\mid \mathbf{f}_i = \mathbf{f} \}$$
for any element $\mathbf{f}$ of $\nn{n}{I}$ and any $i$. Then, we have the equation 
$$1=(-1)^n\sum^n_{i=0}(-1)^i\#\overline{N_i}(\mathrm{Sd}( \mathcal{I}))_{\mathbf{f}}$$
proved in the next theorem. By summing the equations over all elements of $\mathbf{f}$ of $\nn{n}{I}$, we have $$\#\overline{N_n}( \mathcal{I})=(-1)^n\sum^n_{i=0}(-1)^i\#\overline{N_i}(\mathrm{Sd}( \mathcal{I}))_n.$$

We work on the polynomial ring $\mathbb{Z}[s]$ before substituting $s$ by $-1$ to make calculations easy to see.

\begin{them}\label{main}
Let $\mathcal{J}$ be a small category and $n$ be a natural number. Then, for any $\mathbf{f}$ of $\overline{N_n}( \mathcal{J})$, we have
\begin{eqnarray}
\sum^n_{i=0}(\#\overline{N_i}(\mathrm{Sd}( \mathcal{J}))_{\mathbf{f}})s^i-s^n&=&(1+s)P_{\mathbf{f}}(s)\label{XX}
\end{eqnarray}
for some $P_{\mathbf{f}}(s)$ of $\mathbb{Z}[s]$.
\begin{proof}
We will give an inductive proof. 

At $n=0$, the equation holds as $P_{\mathbf{f}} (s)=0$.

Suppose the equation holds for any $k$ less than $n$. Then, we have 
$$\#\overline{N_i}(\mathrm{Sd}( \mathcal{J}))_{\mathbf{f}}=\sum^{n-1}_{k=i-1} \sum_{\mathbf{g}\in \overline{N_k}( \mathcal{J})}\#\overline{N_{i-1}}(\mathrm{Sd}( \mathcal{J}))_{\mathbf{g}} \times \# \Hom_{\mathrm{Sd}( \mathcal{J})}(\mathbf{g},\mathbf{f})$$ for any $1\le i \le n.$ The left hand side of $(\ref{XX})$ is 
\begin{eqnarray}
&\displaystyle  1-s^n+\sum^n_{i=1}(\#\overline{N_i}(\mathrm{Sd}( \mathcal{J}))_{\mathbf{f}})s^i \\
=&\displaystyle  1-s^n + \sum_{i=1}^{n} \{ \sum^{n-1}_{k=i-1} \sum_{\mathbf{g}\in \overline{N_k}( \mathcal{J})}\# \overline{N_{i-1}}(\mathrm{Sd}( \mathcal{J}))_{\mathbf{g}} \times \# \text{Hom}_{\mathrm{Sd}( \mathcal{J})}(\mathbf{g},\mathbf{f})\}s^i \notag \\
=&\displaystyle  1-s^n + \sum_{i=0}^{n-1} \{ \sum^{n-1}_{k=i} \sum_{\mathbf{g}\in \overline{N_k}( \mathcal{J})}\# \overline{N_{i}}(\mathrm{Sd}( \mathcal{J}))_{\mathbf{g}} \times \# \text{Hom}_{\mathrm{Sd}( \mathcal{J})}(\mathbf{g},\mathbf{f}) \} s^{i+1} \notag \\
=&\displaystyle  1-s^n +\sum^{n-1}_{k=0} \sum_{\mathbf{g}\in \overline{N_k}( \mathcal{J})} \# \text{Hom}_{\mathrm{Sd}( \mathcal{J})}(\mathbf{g},\mathbf{f})s(\sum^k_{i=0} \#\overline{N_i}(\mathrm{Sd}( \mathcal{J}))_{\mathbf{g}}s^i) \label{kogatsura}
\end{eqnarray}
The assumption implies $(\ref{kogatsura})$ is equal to
\begin{eqnarray}
&\displaystyle  1-s^n +\sum^{n-1}_{k=0} \# \text{Hom}_{\mathrm{Sd}( \mathcal{J})}(\mathbf{g},\mathbf{f})s( \sum_{\mathbf{g}\in \overline{N_k}( \mathcal{J})} (1+s)P^k_{\mathbf{g}}(s) + s^k ) \notag\\
=&\displaystyle  1-s^n +\sum^{n-1}_{k=0} \sum_{\mathbf{g}\in \overline{N_k}( \mathcal{J})} \# \text{Hom}_{\mathrm{Sd}( \mathcal{J})}(\mathbf{g},\mathbf{f})s^{k+1} +  (1+s)P_{\mathbf{f}}(s) \label{YY}
\end{eqnarray}
for some $P_{\mathbf{f}}(s)$ in $\mathbb{Z}[s].$
 
Note that $\text{Hom}_{\mathrm{Sd}( \mathcal{J})} (\mathbf{g},\mathbf{f})$ is an empty-set for most $\mathbf{g}$ in $\overline{N_i}( \mathcal{J})$, since they have to satisfy the following commutative diagram.
 $$\xymatrix{
&\mathcal{J}&\\
[i]\ar[ru]^{\mathbf{g}}\ar[rr]^\varphi&&[n]\ar[lu]_{\mathbf{f}}
}$$
Since $\mathbf{g}=\mathbf{f}\circ \varphi$ and $\mathbf{f}$ is fixed, $\mathbf{g}$ is exactly determined by $\varphi$. Lemma \ref{inj} implies $\varphi$ is an injection. There are $n+1 \choose i+1$ order-preserving injections from $[i]$ to $[n]$. We can express the set of such injections by the set of proper increasing sequences of $(i+1)$ elements in $[n]$.
$$\text{Inj}([i],[n])\cong \{ (k_0, \dots, k_i)\in [n]^{i+1}\mid k_0<\dots <k_i\}$$ 
We apply Proposition \ref{Hanaki} to the set in the right hand side which corresponds to $A_i^{(n)}$ in the Proposition. Define an equivalence relation $\sim_{\mathbf{f}}$ on $[n]$ by $j\sim_{\mathbf{f}} j'$ if $\mathbf{f}(\min\{j,j'\}\rightarrow \max\{j,j'\})$ is an identity morphism. Then, we have a one-to-one correspondence between 
$$\{\varphi \in \text{Inj}([i],[n])\mid \exists j \text{ s.t. } \mathbf{f}(\varphi(j)\rightarrow \varphi(j+1))=1\}$$
and 
$$\{(k_0,k_1, \dots , k_i) \in A_k^{(n)}\mid \exists m \text{ s.t. } i_m \sim_{\textbf{f}} i_{m+1}\}.$$The latter one has been denoted by $B_i^{(n)}$ in Proposition \ref{Hanaki} in the case that the equivalence relation is $\sim_{\textbf{f}}$.
We obtain 
\begin{eqnarray*}
\sum_{\mathbf{g} \in \overline{N_i}( \mathcal{J} )}\#\{\varphi: \mathbf{g}\rightarrow \mathbf{f}\}&=&{n+1 \choose i+1}-\# B^{(n)}_i \\
&=& {n+1 \choose i+1}-\beta^{(n)}_i.
\end{eqnarray*}

The remainder of the proof is devoted to counting the number of morphisms eliminated by the equivalence relation in the definition of the barycentric subdivision. The number is expressed by $\gamma^{(n)}_i$ of Proposition \ref{Hanaki}. We obtain 
\begin{eqnarray*}
\sum_{\mathbf{g}\in \overline{N_i}( \mathcal{J} )} \#\text{Hom}_{\mathrm{Sd}( \mathcal{J})} (\mathbf{g},\mathbf{f})&=&{n+1 \choose i+1}-\beta^{(n)}_i-\gamma^{(n)}_i.
\end{eqnarray*}

So the right hand side of (\ref{YY}) is

\begin{eqnarray*}
&\displaystyle 1+\sum^{n-1}_{i=0}\{ {n+1 \choose i+1}-\beta^{(n)}_i-\gamma^{(n)}_i \}s^{i+1}-s^n+(1+s)P_{\mathbf{f}} (s)\\
=&\displaystyle 1+\sum^{n-1}_{i=0}{n+1 \choose i+1}s^{i+1}-s\sum^{n-1}_{i=0}(\beta^{(n)}_i+\gamma^{(n)}_i)s^i-s^n+(1+s)P_{\mathbf{f}} (s)\\
=&\displaystyle \sum^{n}_{i=0}{n+1 \choose i}s^i -s\sum^{n-1}_{i=0}(\beta^{(n)}_i+\gamma^{(n)}_i)s^i-s^n+(1+s)P_{\mathbf{f}} (s)\\
=&\displaystyle (1+s)^{n+1}-s^{n+1}-s^n -s\sum^{n-1}_{i=0}(\beta^{(n)}_i+\gamma^{(n)}_i)s^i+(1+s)P_{\mathbf{f}} (s)\\
=&\displaystyle -s\sum^{n-1}_{i=0}(\beta^{(n)}_i+\gamma^{(n)}_i)s^i+(1+s)Q_{\mathbf{f}} (s)
\end{eqnarray*}
for some $Q_{\mathbf{f}} (s)$ in $\mathbb{Z}[s]$. Here, note that $\sim_{\mathbf{f}}$ satisfies the property that if $k\sim_{\mathbf{f}}k'$, then $k\not = k'+1$ and $k+1\not = k'$ since $\mathbf{f}$ is non-degenerate. So we can apply Proposition \ref{Hanaki}. Therefore, $\sum^{n-1}_{i=0}(\beta^{(n)}_i+\gamma^{(n)}_i)s^i$ can be factored by $(1+s)$. 
\end{proof}
\end{them}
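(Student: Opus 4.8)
The plan is to first collapse the two quantities $\beta_k^{(n)}$ and $\gamma_k^{(n)}$ into a single count, and then to recognize the resulting alternating sum as the (reduced) Euler characteristic of a cone. Since $B_k^{(n)}\subseteq A_k^{(n)}$ and the $\approx$-classes partition $A_k^{(n)}-B_k^{(n)}$, I would first record that
\[ \gamma_k^{(n)}=\sum_{[x]\in C_k^{(n)}}(\#[x]-1)=\#(A_k^{(n)}-B_k^{(n)})-\#C_k^{(n)}=\#A_k^{(n)}-\beta_k^{(n)}-\#C_k^{(n)}, \]
so that $\beta_k^{(n)}+\gamma_k^{(n)}=\#A_k^{(n)}-\#C_k^{(n)}=\binom{n+1}{k+1}-\#C_k^{(n)}$. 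The target identity then splits as the difference of two alternating sums, and it suffices to show that $\sum_{k=0}^{n-1}(-1)^k\binom{n+1}{k+1}$ and $\sum_{k=0}^{n-1}(-1)^k\#C_k^{(n)}$ agree; I expect both to equal $1+(-1)^{n+1}$. The first is the standard binomial identity obtained from $\sum_{j=0}^{n+1}(-1)^j\binom{n+1}{j}=0$ by discarding the $j=0$ and $j=n+1$ terms.

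The heart of the argument is therefore the claim $\sum_{k=-1}^{n}(-1)^k\#C_k^{(n)}=0$, equivalently $\sum_{k=0}^{n}(-1)^k\#C_k^{(n)}=1$, where I use the boundary values $\#C_{-1}^{(n)}=\#C_n^{(n)}=1$; note that $A_n^{(n)}-B_n^{(n)}=A_n^{(n)}=\{(0,1,\dots,n)\}$ precisely because $\sim$ admits no consecutive pairs. I would prove this by exhibiting a fixed-point-free, sign-reversing involution on $\bigsqcup_{k=-1}^{n}C_k^{(n)}$. Here it is convenient to identify each class in $C_k^{(n)}$ with its pattern of $\sim$-classes $(Q_0,\dots,Q_k)$, where $Q_m\neq Q_{m+1}$ for all $m$ and the pattern is realizable by some strictly increasing sequence (any increasing realization of such a pattern is automatically free of $\sim$-adjacencies, so this is a faithful description of $C_k^{(n)}$). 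The involution is ``toggle the class of $0$ at the front'': if $Q_0=[0]$, delete $Q_0$; otherwise prepend $[0]$. This changes $k$ by $\pm1$, hence is sign-reversing, and it is clearly free of fixed points, so the alternating sum vanishes. Conceptually this is the statement that the subdivided simplex carrying these data is a cone with apex the class of $0$, hence contractible.

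The main obstacle is checking that this toggle is well defined, i.e. that it sends good realizable patterns to good realizable patterns and is genuinely an involution; all of this turns on $0$ being the global minimum of $[n]$. Deleting $Q_0=[0]$ leaves a pattern whose new leading class is $Q_1\neq[0]$ (by $Q_0\neq Q_1$), so the inverse ``prepend $[0]$'' recovers the original. Conversely, when $Q_0\neq[0]$, any element $i_0\in Q_0$ realizing the pattern satisfies $i_0\geq1$, so prepending $[0]$ and realizing it by the sequence $0<i_0<\cdots$ keeps the chain strictly increasing and good. I would spend the most care on verifying that realizability is preserved in this second direction, and on observing that repeated non-adjacent classes cause no difficulty because the toggle only alters the front of the pattern.

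Finally I would handle the two extreme indices: at $k=-1$ the empty pattern pairs with $([0])\in C_0^{(n)}$, and at $k=n$ the unique pattern $([0],[1],\dots,[n])$ has $Q_0=[0]$ and so is deleted down into $C_{n-1}^{(n)}$; thus the involution indeed maps $\bigsqcup_{k=-1}^{n}C_k^{(n)}$ into itself. Combining the two equal alternating sums in the splitting from the first paragraph then gives $\sum_{k=0}^{n-1}(-1)^k(\beta_k^{(n)}+\gamma_k^{(n)})=0$, as required.
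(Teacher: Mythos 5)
There is a genuine gap: what you have proved is Proposition \ref{Hanaki}, not Theorem \ref{main}. The theorem asserts that the polynomial $\sum_{i=0}^{n}(\#\overline{N_i}(\mathrm{Sd}(\mathcal{J}))_{\mathbf{f}})s^i-s^n$ is divisible by $1+s$, and nothing in your write-up mentions chains in $\mathrm{Sd}(\mathcal{J})$ at all: the quantities $\beta_k^{(n)},\gamma_k^{(n)},C_k^{(n)}$ enter your argument as the stated goal, whereas in the paper they appear only at the end of a substantial reduction, which is the actual content of the proof of Theorem \ref{main}. That reduction consists of (i) an induction on $n$ via the recursion $\#\overline{N_i}(\mathrm{Sd}(\mathcal{J}))_{\mathbf{f}}=\sum_{k=i-1}^{n-1}\sum_{\mathbf{g}\in\overline{N_k}(\mathcal{J})}\#\overline{N_{i-1}}(\mathrm{Sd}(\mathcal{J}))_{\mathbf{g}}\times\#\Hom_{\mathrm{Sd}(\mathcal{J})}(\mathbf{g},\mathbf{f})$, which regroups the left-hand side of (\ref{XX}) so that the inductive hypothesis applies to each $\mathbf{g}$; (ii) the enumeration $\sum_{\mathbf{g}\in\overline{N_i}(\mathcal{J})}\#\Hom_{\mathrm{Sd}(\mathcal{J})}(\mathbf{g},\mathbf{f})=\binom{n+1}{i+1}-\beta_i^{(n)}-\gamma_i^{(n)}$, which uses Lemma \ref{inj} to identify the relevant maps with order-preserving injections $\varphi\colon[i]\to[n]$, with $\beta_i^{(n)}$ discarding those $\varphi$ for which $\mathbf{f}\circ\varphi$ is degenerate and $\gamma_i^{(n)}$ accounting for the identifications imposed by the equivalence relation in the definition of $\mathrm{Sd}$, all with respect to $j\sim_{\mathbf{f}}j'$ iff $\mathbf{f}(\min\{j,j'\}\to\max\{j,j'\})$ is an identity; and (iii) the binomial bookkeeping $(1+s)^{n+1}-s^{n+1}-s^n-s\sum_{i=0}^{n-1}(\beta_i^{(n)}+\gamma_i^{(n)})s^i+\cdots$, which reduces everything to the vanishing of $\sum_{i=0}^{n-1}(-1)^i(\beta_i^{(n)}+\gamma_i^{(n)})$. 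Since the paper states Proposition \ref{Hanaki} before the theorem and explicitly defers its proof, the identity you establish could simply have been cited; the steps that actually needed proving for Theorem \ref{main} are exactly the ones you omitted.

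That said, the piece you did prove is correct, and by a genuinely different and cleaner route than the paper's own proof of Proposition \ref{Hanaki}. Your opening reduction $\beta_k^{(n)}+\gamma_k^{(n)}=\binom{n+1}{k+1}-\#C_k^{(n)}$ and the boundary values $\#C_{-1}^{(n)}=\#C_n^{(n)}=1$ (the latter being precisely where the no-consecutive-pairs hypothesis on $\sim$ enters, which in the application comes from non-degeneracy of $\mathbf{f}$) coincide with the paper's first steps. But where the paper proves $\sum_{k=-1}^{n}(-1)^k\#C_k^{(n)}=0$ by induction on $n$, choosing lexicographically minimal representatives $M^{(n)}$ and splitting $\im{M^{(n)}}$ according to the largest $n_0<n$ with $n_0\sim n$, you encode each class in $C_k^{(n)}$ faithfully by its pattern $(Q_0,\dots,Q_k)$ of $\sim$-classes with $Q_m\neq Q_{m+1}$ and toggle the class $[0]$ at the front. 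The verifications you flag are the right ones and they go through: prepending preserves realizability because $Q_0\neq[0]$ forces any realization to have $i_0\geq 1$, deletion leaves a leading class $\neq[0]$ so the toggle is a fixed-point-free involution, and the extremes $k=-1$ and $k=n$ pair inward; hence the alternating sum vanishes. This makes the contractibility (``cone with apex $[0]$'') transparent and avoids the paper's induction entirely, so if the assigned statement had been Proposition \ref{Hanaki} your argument would be a complete and arguably preferable proof; for Theorem \ref{main}, you still owe the reduction (i)--(iii).
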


\begin{coro}\label{function}
Let $\mathcal{I}$ be a finite category and $n$ be a natural number. Then,
$$\#\overline{N_n}( \mathcal{I})=(-1)^n\sum^n_{i=0}(-1)^i\#\overline{N_i}(\mathrm{Sd}( \mathcal{I}))_n.$$
\begin{proof}
Theorem \ref{main} implies 
$$\sum^n_{i=0}(\#\overline{N_i}(\mathrm{Sd}( \mathcal{I}))_{\mathbf{f}})s^i-s^n=(1+s)P_{\mathbf{f}}(s)$$
for all $\mathbf{f}$ of $\overline{N_n}( \mathcal{I})$. Sum this equation over all $\mathbf{f}$ of $\overline{N_n}( \mathcal{I})$. Then, we have 

\begin{eqnarray*}
\sum_{\mathbf{f} \in \overline{N_n}( \mathcal{I})} \sum^n_{i=0}(\#\overline{N_i}(\text{Sd}( \mathcal{I}))_{\mathbf{f}})s^i- \sum_{\mathbf{f} \in \overline{N_n}( \mathcal{I})} s^n&=&(1+s) \sum_{\mathbf{f} \in \overline{N_n}( \mathcal{I})} P_{\mathbf{f}}(s) \\
\sum^n_{i=0}(\#\overline{N_i}(\text{Sd}( \mathcal{I}))_n s^i-\#\overline{N_n}(\mathcal{I})s^n &=& (1+s) \sum_{\mathbf{f} \in \overline{N_n}( \mathcal{I})} P_{\mathbf{f}}(s). \\
\end{eqnarray*}
At $s=-1$, we have 
$$\#\overline{N_i}( \mathcal{I})=(-1)^n\sum^n_{i=0}(-1)^i\#\overline{N_n}(\text{Sd}( \mathcal{I}))_n.$$
\end{proof}
\end{coro}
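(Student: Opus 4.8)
The final statement to prove is Corollary \ref{function}, which asserts the counting identity
$$\#\overline{N_n}( \mathcal{I})=(-1)^n\sum^n_{i=0}(-1)^i\#\overline{N_i}(\mathrm{Sd}( \mathcal{I}))_n$$
for a finite category $\mathcal{I}$ and natural number $n$. The plan is to deduce this purely formally from Theorem \ref{main}, whose proof I am entitled to assume. Theorem \ref{main} supplies, for each individual simplex $\mathbf{f}$ of $\overline{N_n}(\mathcal{I})$, a polynomial identity in $\mathbb{Z}[s]$ relating the generating polynomial of the fibers $\overline{N_i}(\mathrm{Sd}(\mathcal{I}))_{\mathbf{f}}$ to $s^n$ modulo the ideal $(1+s)$. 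The corollary is the aggregate consequence obtained by summing over all $\mathbf{f}$ and then specializing $s$ to $-1$.

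First I would record, from Theorem \ref{main}, the per-simplex identity
$$\sum^n_{i=0}\bigl(\#\overline{N_i}(\mathrm{Sd}( \mathcal{I}))_{\mathbf{f}}\bigr)s^i-s^n=(1+s)P_{\mathbf{f}}(s),\qquad P_{\mathbf{f}}(s)\in\mathbb{Z}[s],$$
valid for every $\mathbf{f}\in\overline{N_n}(\mathcal{I})$. Since $\mathcal{I}$ is finite, $\overline{N_n}(\mathcal{I})$ is a finite set, so I may legitimately sum this identity over all $\mathbf{f}\in\overline{N_n}(\mathcal{I})$ inside the ring $\mathbb{Z}[s]$. The second key step is the bookkeeping identity
$$\sum_{\mathbf{f}\in\overline{N_n}(\mathcal{I})}\#\overline{N_i}(\mathrm{Sd}(\mathcal{I}))_{\mathbf{f}}=\#\overline{N_i}(\mathrm{Sd}(\mathcal{I}))_n,$$
which holds because the fibers $\overline{N_i}(\mathrm{Sd}(\mathcal{I}))_{\mathbf{f}}$ partition $\overline{N_i}(\mathrm{Sd}(\mathcal{I}))_n$ as $\mathbf{f}$ ranges over the simplices of top filtration value $n$: by definition the top object $\mathbf{f}_i$ of a chain in $\mathrm{Sd}(\mathcal{I})$ with $L(\mathbf{f}_i)=n$ lies in $\overline{N_n}(\mathcal{I})$, so each such chain is counted in exactly one fiber. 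Summing the left-hand side therefore yields $\sum_{i=0}^n\#\overline{N_i}(\mathrm{Sd}(\mathcal{I}))_n\,s^i$, and the term $\sum_{\mathbf{f}}s^n$ collapses to $\#\overline{N_n}(\mathcal{I})\,s^n$.

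After summation the identity reads
$$\sum^n_{i=0}\#\overline{N_i}(\mathrm{Sd}(\mathcal{I}))_n\,s^i-\#\overline{N_n}(\mathcal{I})\,s^n=(1+s)\sum_{\mathbf{f}\in\overline{N_n}(\mathcal{I})}P_{\mathbf{f}}(s)$$
in $\mathbb{Z}[s]$. The final step is to substitute $s=-1$. The right-hand side vanishes identically because of the explicit factor $(1+s)$ and the fact that the sum of the $P_{\mathbf{f}}(s)$ is again a genuine polynomial (finitely many finite-valued summands), so no convergence or pole issues arise. This leaves
$$\sum^n_{i=0}(-1)^i\#\overline{N_i}(\mathrm{Sd}(\mathcal{I}))_n=(-1)^n\#\overline{N_n}(\mathcal{I}),$$
and multiplying through by $(-1)^n$ rearranges this into the claimed formula.

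I do not expect a genuine obstacle here: the corollary is a formal corollary, and all the substance is already contained in Theorem \ref{main}. The only points requiring care are the combinatorial fibering identity for $\#\overline{N_i}(\mathrm{Sd}(\mathcal{I}))_n$ and the justification that summing finitely many polynomial identities and then evaluating at $s=-1$ is legitimate in $\mathbb{Z}[s]$; the finiteness of $\mathcal{I}$ (hence of each $\overline{N_n}(\mathcal{I})$) is exactly what guarantees these steps. Working in $\mathbb{Z}[s]$ and only specializing at the end, rather than manipulating the value at $s=-1$ throughout, is what keeps the factor $(1+s)$ visible and makes the vanishing of the right-hand side transparent.
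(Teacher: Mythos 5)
Your proof is correct and follows essentially the same route as the paper: sum the per-simplex identity of Theorem \ref{main} over all $\mathbf{f}\in\overline{N_n}(\mathcal{I})$ in $\mathbb{Z}[s]$, then specialize at $s=-1$. The only difference is that you explicitly justify the fibering identity $\sum_{\mathbf{f}}\#\overline{N_i}(\mathrm{Sd}(\mathcal{I}))_{\mathbf{f}}=\#\overline{N_i}(\mathrm{Sd}(\mathcal{I}))_n$ (which the paper uses silently, and which holds because $L$ strictly increases along non-degenerate chains, so the maximum is attained at the top object), a welcome addition rather than a deviation.
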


\begin{them}\label{comm}
Let $\mathcal{I}$ be a finite category for which the series Euler characteristic can be defined. Then, $\chi_{\text{fil}}(\mathrm{Sd}(\mathcal{I}), L)$ is also defined and they coincide
$$\chi_{\Sigma}(\mathcal{I})=\chi_{\text{fil}}(\mathrm{Sd}(\mathcal{I}), L)$$
where $L$ is the length functor defined in Example \ref{length}.
\begin{proof}
Recall that the series Euler characteristic of $\mathcal{I}$ is defined by 
$$\chi_{\sum}(\mathcal{I})=(\sum_{n=0}^\infty \#\overline{N_n}( \mathcal{I})t^n )|_{t=-1}.$$
Corollary \ref{function} implies 
\begin{eqnarray*}
\sum_{n=0}^\infty \#\overline{N_n}( \mathcal{I})t^n &=& \sum_{n=0}^\infty \left( (-1)^n\sum^n_{i=0}(-1)^i\#\overline{N_i}(\text{Sd}( \mathcal{I}))_n\right) t^n .
\end{eqnarray*}
Since they coincide as formal power series, if $\chi_{\sum}(\mathcal{I})$ exists, the other also does. We obtain 
$$\chi_{\text{fil}}(\mathrm{Sd}(\mathcal{I}), L).$$
\end{proof}
\end{them}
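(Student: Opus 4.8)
The plan is to reduce everything to the coefficient identity of Corollary \ref{function}, which is the substantive input already established via Theorem \ref{main} and Proposition \ref{Hanaki}. The proof then becomes a matter of matching two generating functions coefficient-by-coefficient and transporting the evaluation at $t=-1$ across the resulting equality of power series.

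First I would check that $\chi_{\text{fil}}(\mathrm{Sd}(\mathcal{I}), L)$ is even admissible, i.e. that the two standing hypotheses in the definition of $\chi_{\text{fil}}$ hold for the $\mathbb{N}$-filtered acyclic category $(\mathrm{Sd}(\mathcal{I}), L)$ of Example \ref{length}. Finiteness of each $\overline{N_i}(\mathrm{Sd}(\mathcal{I}))_n$ follows from $\mathcal{I}$ being finite: it has finitely many morphisms, hence finitely many non-degenerate $n$-simplices, and an $i$-chain counted in filtration degree $n$ has all of its vertices determined as subsimplices of its top vertex, of which there are only finitely many. The vanishing $\overline{N_i}(\mathrm{Sd}(\mathcal{I}))_n=\emptyset$ for $n<i$ follows from Proposition \ref{acyclic} together with Lemma \ref{inj}: a non-degenerate $i$-chain $\mathbf{f}_0\to\cdots\to\mathbf{f}_i$ in the acyclic category $\mathrm{Sd}(\mathcal{I})$ consists of non-identity morphisms, each of which strictly raises the length $L$, so $L(\mathbf{f}_0)<\cdots<L(\mathbf{f}_i)$ forces $L(\mathbf{f}_i)=n\ge i$.

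Next I would write the defining power series $f_{\chi}(\mathrm{Sd}(\mathcal{I}), L)(t)=\sum_{n=0}^\infty(-1)^n\bigl(\sum_{i=0}^n(-1)^i\#\overline{N_i}(\mathrm{Sd}(\mathcal{I}))_n\bigr)t^n$ and substitute the identity of Corollary \ref{function}, namely $(-1)^n\sum_{i=0}^n(-1)^i\#\overline{N_i}(\mathrm{Sd}(\mathcal{I}))_n=\#\overline{N_n}(\mathcal{I})$, into each coefficient. This collapses the series to $\sum_{n=0}^\infty\#\overline{N_n}(\mathcal{I})t^n$, which is exactly the generating function whose evaluation at $t=-1$ defines $\chi_{\Sigma}(\mathcal{I})$. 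Here I would also record the bookkeeping identity $\#\overline{N_i}(\mathrm{Sd}(\mathcal{I}))_n=\sum_{\mathbf{f}\in\overline{N_n}(\mathcal{I})}\#\overline{N_i}(\mathrm{Sd}(\mathcal{I}))_{\mathbf{f}}$, obtained by partitioning the $i$-chains according to their unique top vertex, which is precisely the passage by which Corollary \ref{function} repackages the per-$\mathbf{f}$ statement of Theorem \ref{main}.

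Finally I would transport the $t=-1$ evaluation. Since the two formal power series coincide identically in $\mathbb{Z}[[t]]$, any rational representative $g(t)/h(t)\in\mathbb{Q}(t)$ of one inside $\mathbb{Q}((t))$ is a rational representative of the other; hence the existence of such a representative with $h(-1)\ne 0$ witnessing $\chi_{\Sigma}(\mathcal{I})$ immediately yields the existence of $\chi_{\text{fil}}(\mathrm{Sd}(\mathcal{I}), L)$, and the two values are computed from the same expression $g(-1)/h(-1)$. This is the point at which the hypothesis that $\mathcal{I}$ admits a series Euler characteristic is used, and it is the only delicate step in this last stage. The genuine difficulty of the whole circle of ideas lives upstream, in Theorem \ref{main} and Proposition \ref{Hanaki}; at the level of this theorem the only care required is the admissibility verification above and the clean logical handling of the implication ``equal power series induce equal $t=-1$ evaluations.''
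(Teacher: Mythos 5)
Your proposal is correct and follows essentially the same route as the paper: Corollary \ref{function} gives coefficientwise equality of the two generating functions, and the $t=-1$ evaluation is transported across the resulting identity in $\mathbb{Z}[[t]]$. The only difference is that you explicitly verify the admissibility hypotheses of $\chi_{\text{fil}}$ for $(\mathrm{Sd}(\mathcal{I}),L)$ --- finiteness of each $\overline{N_i}(\mathrm{Sd}(\mathcal{I}))_n$ and its vanishing for $n<i$ --- which the paper's proof leaves implicit; this is a worthwhile addition, not a deviation.
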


We have completed the proof of our main theorem under the assumption of Proposition \ref{Hanaki}, whose proof is given below.

\begin{proof}[Proof of Proposition \ref{Hanaki}]
We have 
\begin{eqnarray*}
\gamma_k^{(n)} & = & \sum_{[x]\in C_k^{(n)}} (\#[x]-1)\\
&=& \sum_{[x]\in C_k^{(n)}} \#[x]- \sum_{[x]\in C_k^{(n)}} 1 \\
&=& (\#A_k^{(n)}-\#B_k^{(n)})- \#C_k^{(n)}\\
&=&  {n+1 \choose k+1}  - \beta_k^{(n)} - \#C_k^{(n)}.
\end{eqnarray*}
So we have 

\begin{eqnarray}
\sum^{n-1}_{k=0}(-1)^k \left( \beta_k^{(n)}+\gamma_k^{(n)}\right) & = & \sum^{n-1}_{k=0}(-1)^k \left( {n+1 \choose k+1}-\#C_k^{(n)} \right) \label{eqn:Ck}.
\end{eqnarray}
Since $A_n^{(n)}=\{(0,1, \dots , n)\}$ and any two numbers which are next to each other are not equivalent, $B_n^{(n)}=\emptyset$. This implies $C_n^{(n)}$ is a one-point set. Thus, the right hand side of  $(\ref{eqn:Ck})$ is 
\begin{eqnarray*}
\sum^{n}_{k=-1}(-1)^k \left( {n+1 \choose k+1}-\#C_k^{(n)} \right)  &=& \sum^{n}_{k=-1}(-1)^k{n+1 \choose k+1} - \sum^{n}_{k=-1}(-1)^k\#C_k^{(n)}\\
&=& - \sum^{n}_{k=-1}(-1)^k\#C_k^{(n)}.
\end{eqnarray*}
To prove this Lemma it suffices to show that 
\begin{eqnarray}\sum^{n}_{k=-1}(-1)^k\#C_k^{(n)}=0.\label{eqn:Ck2}
\end{eqnarray}
We prove this by induction on $n$.

At $n=0$, the left hand side of $(\ref{eqn:Ck2})$ is 
\begin{eqnarray*}
\sum^{0}_{k=-1}(-1)^k \#C_k^{(0)} & = &(-1)\#C_{-1}^{(0)} + \#C_{0}^{(0)}\\
&=& -1+1\\
&=& 0.
\end{eqnarray*}

 We need some preparations for the next step. Let $n_0$ be the maximum number of all numbers which are equivalent to $n$ and less than $n$.
$$n_0 = \max \{ m \in [n] \mid m\sim n , m<n \}$$
If it does not exist, formally we let $n_0=-1$. We denote $ \bigcup^\infty_{k=-1} A_k^{(n)}$ by simply $A^{(n)}$. $B^{(n)}$ and $C^{(n)}$ are also defined in the same way. Define a map $\map{\psi_n}{A^{(n-1)}}{A^{(n)}}$ by $\psi_n(i_0,i_1,\dots, i_k)= (i_0,i_1,\dots, i_k,n)$. Then, it is clear $\psi$ is an injection. We give a lexicographic order to $A_k^{(n)}$. Then, $A_k^{(n)}$ is an well-ordered set. Define the map $\map{M_k^{(n)}}{C_k^{(n)}}{A_k^{(n)}}$ by taking the mininum element of each equivalence classes, that is, $M_k^{(n)}[(i_0,i_1,\dots, i_k)]=\min [(i_0,i_1,\dots, i_k)]$. Since $[(i_0,i_1,\dots, i_k)]$ is an non-empty subset of $A_k^{(n)}$, there certainly exists the minimum element. We denote $\map{\bigcup^\infty_{k=-1} M_k^{(n)}}{C^{(n)}}{A^{(n)}}$ by $M^{(n)}$. It is clear $M^{(n)}$ is an injection. So we count the number of elements of $\im M_k^{(n)}$ instead of the number of elements of $C_k^{(n)}$. 

Divide $\im M^{(n)}$ into disjoint sets $\im M^{(n-1)}$ and $(\psi_n(\im M^{(n-1)})\cap \im M^{(n)})$
\begin{eqnarray}
\im M^{(n)}& = & \im M^{(n-1)} \cup (\psi_n(\im M^{(n-1)})\cap \im M^{(n)})\label{n}
\end{eqnarray}
And divide $\im M^{(n-1)}$ into three disjoint sets $\im M^{(n_0-1)}$, $\im M^{(n_0)}-\im M^{(n_0-1)}$ and $\im M^{(n-1)}-\im M^{(n_0)}$. 
\begin{multline}
\im M^{(n-1)}=\im M^{(n_0-1)} \cup (\im M^{(n_0)}-\im M^{(n_0-1)} )\cup \\
(\im M^{(n-1)}-\im M^{(n_0)}) \label{udon}
\end{multline}
Then, we have
\begin{multline}
(\psi_n(\im M^{(n-1)})\cap \im M^{(n)})= \\
(\psi_n (\im M^{(n_0-1)})\cap \im M^{(n)}) \\ 
\cup (\psi_n (\im M^{(n_0)}-\im M^{(n_0-1)}) \cap \im M^{(n)})\\
 \cup (\psi_n (\im M^{(n-1)}-\im M^{(n_0)}) \cap \im M^{(n)}).\label{n-1}
\end{multline}
Here, note that $\psi_n (\im M^{(n_0-1)})\cap \im M^{(n)}$ is an empty-set. Indeed, for any $(i_0,i_1\dots, i_{k-1}, n)$ of $\psi_n (\im M^{(n_0-1)})$, we have $$(i_0,i_1\dots, i_{k-1}, n) >(i_0,i_1\dots, i_{k-1}, n_0)$$ and $$(i_0,i_1\dots, i_{k-1}, n)\approx (i_0,i_1\dots, i_{k-1}, n_0).$$ So $(i_0,i_1\dots, i_{k-1}, n)$ is not minimum in its equivalence class. Moreover, 
$$\psi_n (\im M^{(n_0)}-\im M^{(n_0-1)})\cap \im M^{(n)}$$
 is also an empty-set. Since all the elements of 
$$\psi_n (\im M^{(n_0)}-\im M^{(n_0-1)})$$ 
have the form of $(i_0,i_1,\dots, i_{k-1},n_0,n)$, they belong to $B^{(n)}$.
 Therefore, we obtain $(\ref{n-1})$ is
\begin{eqnarray*}
(\psi_n(\im M^{(n-1)})\cap \im M^{(n)}) &=&(\psi_n (\im M^{(n-1)}-\im M^{(n_0)}) \cap \im M^{(n)}) \\
&=& \psi_n (\im M^{(n-1)}-\im M^{(n_0)}).
\end{eqnarray*}
This implies $(\ref{n})$ is 
\begin{eqnarray}
\im M^{(n)}& = & \im M^{(n-1)} \cup (\psi_n(\im M^{(n-1)})- \im M^{(n_0)}).\label{finally}
\end{eqnarray}

Let $l$ be the length function defined by 
$$l(i_0,i_1,\dots, i_k)=k$$
for $(i_0,i_1,\dots, i_k) \in A^{(n)}$.

We finally start to calculate the left hand side of $(\ref{eqn:Ck2})$. We have 
\begin{eqnarray*}
\sum^{n}_{k=-1}(-1)^k\#C_k^{(n)}&=&\sum^{n}_{k=-1}(-1)^k\#\im M_k^{(n)} \\
&=&\sum^{n}_{k=-1} \sum_{x\in \im M_k^{(n)}}(-1)^{l(x)}\\
&=&\sum_{x\in \im M^{(n)}}(-1)^{l(x)}.
\end{eqnarray*}

\begin{eqnarray*}
\text{Here, $(\ref{finally})$ implies }&&\\
\sum_{x\in \im M^{(n)}}(-1)^{l(x)}&=& \sum_{x\in \im M^{(n-1)}}(-1)^{l(x)} + \sum_{x\in \im M^{(n-1)}-\im M^{(n_0)}}(-1)^{l(x)}.
\end{eqnarray*}
$(\ref{udon})$ implies the right hand side of the above is equal to 
\begin{multline}
\sum_{x\in \im M^{(n_0-1)} }(-1)^{l(x)} +\sum_{x\in \im M^{(n_0)}-\im M^{(n_0-1)} }(-1)^{l(x)} + \\
\sum_{x\in \im M^{(n-1)}-\im M^{(n_0)}}(-1)^{l(x)} + \sum_{x\in \psi_n (\im M^{(n-1)}-\im M^{(n_0)})}(-1)^{l(x)} \label{kagawa}
\end{multline}
Since $\psi_n$ is an injection, we have 
\begin{eqnarray}
\sum_{x\in \psi_n (\im M^{(n-1)}-\im M^{(n_0)})}(-1)^{l(x)+1} =
\sum_{x\in \im M^{(n-1)}-\im M^{(n_0)}}(-1)^{l(x)}. \notag
\end{eqnarray}
This implies $(\ref{kagawa})$ is equal to 
\begin{eqnarray*}
&\displaystyle \sum_{x\in \im M^{(n_0-1)} }(-1)^{l(x)} +\sum_{x\in \im M^{(n_0)}-\im M^{(n_0-1)} }(-1)^{l(x)} \\
=& \displaystyle \sum_{x\in \im M^{(n_0)} }(-1)^{l(x)} \\
=&\displaystyle  \sum_{k=-1}^{n_0}(-1)^k \# C_k^{(n_0)}.
\end{eqnarray*}
Since $n_0 < n$, the assumption of the induction completes the proof. If $n_0=-1$, we can use the same argument above as $\im M^{(-1)}$ and $\im M^{(-2)}$ are empty-sets. 
\end{proof}

\end{document}